\documentclass[12pt]{amsart}

\textwidth 17.5cm \textheight 24cm
\oddsidemargin 0cm \evensidemargin 0cm \topmargin 0cm
\usepackage{parskip, amsthm,amsmath,amssymb,amscd,verbatim}
\usepackage[margin=0.7in]{geometry}
\pagestyle{plain}
\usepackage{amsfonts,  mathrsfs, stmaryrd}
\usepackage[all]{xy}
\usepackage{relsize}
\usepackage{graphicx}
\usepackage{hyperref}

\usepackage{enumitem,amssymb}
\newlist{todolist}{itemize}{2}
\setlist[todolist]{label=$\square$}
\usepackage{pifont}

\textwidth 6in
\textheight 8.5in
\oddsidemargin 0.25in
\evensidemargin 0.25in
\topmargin -0.1in


\setlength\parindent{24pt}



  \theoremstyle{plain}
  \newtheorem{theorem}{Theorem}[section]
  \newtheorem{lemma}[theorem]{Lemma}

  \newtheorem{cor}[theorem]{Corollary}


  \theoremstyle{remark}


  \theoremstyle{definition}


  \numberwithin{equation}{section}
  \numberwithin{theorem}{section}


\newtheorem{proposition}[theorem]{Proposition}

\newcommand{\Z}{\mathbb{Z}}

\DeclareMathOperator{\Aut}{Aut}

\input xy
\xyoption{all}


\begin{document}


\author{Jonathan Cohen and Kyle Rosengartner}
\title{Finite Groups With Two Irredundant Covers}

\begin{abstract}
An irredundant cover of a finite group $G$ is a collection of proper subgroups whose union is $G$ and which contains no smaller subcover. We classify finite groups which possess exactly two irredundant covers, thereby initiating an answer to a question of Brodie, who classified finite groups with one irredundant cover. 
\end{abstract}

\maketitle

\section{Introduction}

Let $G$ be a group. A collection $\{H_i\}_{i=1}^n$ of proper subgroups of $G$ is called a {\it cover} if $G = \bigcup\limits_{i=1}^n H_i$. The smallest possible $n$ is called the {\it covering number} of $G$, typically denoted $\sigma(G)$. There has been significant progress in the understanding of covering numbers and their structural significance. For a nice survey of many of these and related results, see \cite{Bh}. 

In this article we consider a variation on this theme, in which the quantity of interest is not the size of the cover but the number of distinct covers. To be more precise, we say that a cover $\{H_i\}_{i=1}^n$ of $G$ is {\it irredundant} if it contains no proper subcover. In other words, $\bigcup\limits_{i\neq j} H_i \neq G$ for all $1\leq j\leq n$. Let $\beta(G)$ denote the number of distinct irredundant covers of a finite group $G$ (we will only consider finite groups). 

In \cite{B}, a classification of finite groups with $\beta(G)=1$ is obtained. At the end of that paper, the question is posed of extending such a determination for larger values of $\beta(G)$. The main result of this paper is such a classification for $\beta(G)=2$. To state it, we first give some basic notation: the group $\Z_n$ is the cyclic group of order $n$, $Q_8$ is the quaternion group of order $8$, and $K\rtimes H$ is a semidirect product of the groups $K$ and $H$. 

\begin{theorem}
Suppose that $G$ is a finite group with exactly two irredundant covers. 

A) If $G$ is nilpotent then $G$ is one of the following groups: 

i) $(\Z_{p^2} \rtimes \Z_p) \times \Z_n$ where $(p,n)=1$ and $p>2$ is prime.

ii) $\Z_4\times \Z_2\times \Z_n$ where $n$ is odd.  

B) If $G$ is not nilpotent then $G$ is one of the following groups:

iii) $\Z_{p^t}\rtimes \Z_n$ where $p$ is prime, $(p,n)=1$, $\Z_n$ centralizes $\Z_{p^{t-1}}$, and $\Z_n$ acts on $\Z_{p^t}$ by an automorphism of order $q^2$ for a prime $q$. 
 
 iv) $ \Z_p^2\rtimes \Z_n $ where $p$ is prime, $(p,n)=1$, and $\Z_n$ acts irreducibly on $\Z_p^2$ by an automorphism of prime order. 
 
 v) $ Q_8\rtimes \Z_n$ where $n$ is odd and $\Z_n$ acts on $Q_8$ by an automorphism of order 3. 
 

\end{theorem}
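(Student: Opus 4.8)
The plan is to describe all irredundant covers in terms of the cyclic maximal subgroups of $G$, reduce the problem to a lattice‑theoretic question about a single non‑cyclic maximal subgroup, and then enumerate the finitely many resulting possibilities.

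\textbf{Step 1 (the covering template).} The basic observation is that if $C=\langle g\rangle$ is a cyclic maximal subgroup of $G$, then $C$ is the unique proper subgroup of $G$ containing $g$; hence $C$ lies in \emph{every} cover of $G$, and since $g$ is covered by nothing else, $C$ is never redundant. So, writing $\mathcal C$ for the set of cyclic maximal subgroups of $G$ and $X:=G\setminus\bigcup_{C\in\mathcal C}C$, every irredundant cover of $G$ equals $\mathcal C$ together with a family $\mathcal E$ of proper subgroups that is irredundant and minimal with respect to covering $X$. In particular, if $\mathcal C$ already covers $G$ then $\beta(G)=1$; the converse is \cite{B}, which identifies these groups as $\Z_p^2\times\Z_n$ and $Q_8\times\Z_n$ (with the evident coprimality). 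From now on $X\neq\varnothing$.

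\textbf{Step 2 (collapsing the lattice).} For every subgroup $H$ with $\langle X\rangle\le H\subsetneq G$, the family $\mathcal C\cup\{H\}$ is an irredundant cover of $G$, and distinct $H$ give distinct covers; so $\beta(G)=2$ forces the interval $[\langle X\rangle,G]$ of the subgroup lattice to have at most three elements. One first rules out $\langle X\rangle=G$ — if no maximal subgroup contains $X$ then at least two non‑cyclic maximal subgroups are needed to cover $X$, and one produces a third irredundant cover. Thus $X$ lies in a unique maximal subgroup $D$, necessarily non‑cyclic, and either $\langle X\rangle=D$ or $\langle X\rangle\lessdot D\lessdot G$. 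Setting $\Sigma:=\bigcup_{C\in\mathcal C}(C\cap D)$, so that $X=D\setminus\Sigma$, the two covers are $\mathcal C\cup\{D\}$ and $\mathcal C\cup\mathcal E_0$, where $\mathcal E_0$ is a minimal irredundant family of proper subgroups of $D$ covering $D\setminus\Sigma$. Hence $\beta(G)=2$ is equivalent to: $D\setminus\Sigma$ has \emph{exactly one} such minimal cover (and, when $\langle X\rangle\neq D$, to that cover being a single subgroup, which forces $\langle X\rangle$ cyclic and $X$ to consist of generators of it).

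\textbf{Step 3 (determining $G$, and the converse).} Now one solves this constraint. First peel off coprime cyclic direct or central factors $\Z_n$: they alter neither $X$, $D$, $\Sigma$ nor $\beta$, so it suffices to treat the "cores". Then case‑split. If $\langle X\rangle=D$ and $\Sigma=\{1\}$, the condition reads $\beta(D)=1$, so $D\cong\Z_p^2$ or $Q_8$ by Step 1, and analysing the admissible actions of $G/C_G(D)$ yields family (iv). If $\langle X\rangle=D$ and $\Sigma\neq\{1\}$, one shows $\Sigma$ is a single maximal subgroup of $D$ with $D\cong\Z_p^2$ — giving the modular group $\Z_{p^2}\rtimes\Z_p$ for $p$ odd and $\Z_4\times\Z_2$ for $p=2$, i.e.\ families (i), (ii) — or $D\cong Q_8$ with $\Sigma=Z(Q_8)$, giving family (v). If instead $\langle X\rangle\lessdot D\lessdot G$ with $\langle X\rangle$ cyclic, tracking the two‑step chain forces the metacyclic shape $\Z_{p^t}\rtimes\Z_n$ with the action of type $q^2$ of family (iii). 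The nilpotent/non‑nilpotent dichotomy of the theorem then falls out of the isomorphism type of $D$ and of the action. Conversely, for each listed group one checks directly that its two irredundant covers are precisely the set of all maximal subgroups and its refinement obtained by splitting $D$ into the minimal cover of $D\setminus\Sigma$.

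\textbf{The main obstacle.} Steps 1 and 3 are ultimately bookkeeping — the coprime‑cyclic peeling lemma and a finite shape‑chase over small $p$‑groups, over $\Z_{p^t}\rtimes\Z_n$, and over $Q_8\rtimes\Z_n$. The real difficulty lies in Step 2: showing that $\beta(G)=2$ genuinely collapses the subgroup lattice above $X$, isolates a unique non‑cyclic maximal $D$, and leaves $D\setminus\Sigma$ with a unique minimal cover. One must show that each way this can fail — $X$ not contained in any single maximal subgroup, or $D\setminus\Sigma$ admitting two essentially different minimal covers (whether by subgroups inside $D$ or, a priori, reaching outside it) — produces a third irredundant cover. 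That uniform argument is where essentially all the work, and all the case analysis, is concentrated.
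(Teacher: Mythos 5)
Your Step 1 and the lattice-collapse idea of Step 2 run parallel to the paper's Section 3 (cyclic maximal subgroups lie in every cover; a second cover forces a distinguished noncyclic subgroup; too many subgroups above it would give a third cover), but the proposal has two genuine gaps. First, the pivotal claims of Step 2 are asserted, not proved: that $X$ lies in a unique maximal subgroup $D$, that an irredundant cover cannot handle $X$ with subgroups ``reaching outside'' $D$, and that $\beta(G)=2$ forces a unique minimal cover of $D\setminus\Sigma$. You yourself label this the main obstacle, which is to say the argument is missing; note also that you never establish $\mathcal{C}\neq\varnothing$, i.e.\ that $G$ has a cyclic maximal subgroup at all. (The paper gets the analogous control cheaply: a non-maximal maximal cyclic subgroup $\langle x_1\rangle$ lies in some proper $H$, each such $H$ gives a distinct irredundant cover $\{H\}\cup\{\langle x_i\rangle : x_i\notin H\}$, so $\beta(G)=2$ forces $H$ to be unique, hence characteristic, maximal of prime index, and to contain every non-maximal maximal cyclic subgroup — in particular it contains your $X$, and its complementary $\langle x_i\rangle$'s are cyclic maximal subgroups, giving solvability.)

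Second, and more seriously, Step 3 is not bookkeeping: passing from ``unique $D$ with a unique minimal cover of $D\setminus\Sigma$'' to the explicit families (i)--(v) is where the paper spends Sections 4 and 5, and it requires real structural input. The paper uses the existence of a cyclic maximal subgroup together with the Pylaev--Kuzennyi theorem to split $G=A\rtimes P$ or $P\rtimes A$ with $A$ a cyclic Hall subgroup; the nilpotent case rests on the classification of $p$-groups with a cyclic maximal subgroup and requires individually eliminating generalized quaternion groups $Q_{2^{k+1}}$ with $k\geq 3$, dihedral groups, the modular groups of order at least $p^4$, and $\Z_{p^{t}}\times\Z_p$ with $t\geq 3$; the nonnilpotent case $G=A\rtimes P$ must be shown never to occur; and the nonnilpotent case $P\rtimes A$ with $\Phi(P)\neq 1$ invokes the Berger--Kov\'acs--Newman classification of $p$-groups with cyclic Frattini subgroup to arrive at $\Z_{p^t}$ and $Q_8$. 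The candidates range over infinitely many orders, so a ``finite shape-chase'' is not available without such tools, and your outline supplies neither them nor a substitute: nothing in Step 3 explains, for instance, why in family (iii) the automorphism must have order exactly $q^2$ (rather than a product of two distinct primes) and centralize $\Z_{p^{t-1}}$, or why a noncyclic Sylow subgroup acting on a cyclic normal Hall subgroup cannot yield $\beta(G)=2$. As it stands the proposal is a plausible reduction plus placeholders for the parts that constitute the bulk of the proof.
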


We remark that case iii) can occur if and only if $p-1$ is not square-free, while case iv) 
can occur if and only if $p+1$ is not a power of 2. 
We now outline the contents of this article. In section 2 we prove some preliminary results about irredundant covers. In section 3 we begin the classification of groups $G$ with $\beta(G)=2$, showing that it naturally breaks into two cases. In section 4 we handle the first case, which includes all nilpotent $G$, and in section 5 we handle the second case, which ends up including all nonnilpotent $G$. In section 6 we consider some possibilities for future work.

\section{Preliminaries}

Let $G$ be a finite group and let $\beta(G)$ denote the number of irredundant covers of $G$. We write $\beta(G)=0$ if $G$ is cyclic. If $N\leq  G$ then we say $N$ is {\it cyclically embedded} in $G$ if $\langle N, x\rangle$ is cyclic for all $x\in G$. Clearly any such $N$ is contained in $Z(G)$. A cyclic subgroup $C\leq G$ is a {\it maximal cyclic subgroup} if $C$ is not contained in any larger cyclic subgroup, and is a {\it cyclic maximal subgroup} if $C$ is not contained in any larger proper subgroup. Note that maximal cyclic subgroups always exist but cyclic maximal ones may not.

\begin{lemma}\label{basiclemma} Let $G$ be a finite noncyclic group and $N$ a normal subgroup. 

a) A subgroup appears in some irredundant cover of $G$ if and only it contains a maximal cyclic subgroup of $G$. In particular, any cyclic maximal subgroup appears in all irredundant covers. 

b) We have $\beta(G)\geq \beta(G/N)$, with equality if and only if $N$ is cyclically embedded. 

c) If $(|G|, n)=1$ then $\beta(G\times \Z_n) = \beta(G)$. 
\end{lemma}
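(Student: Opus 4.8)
The plan is to establish (a) first, since (b) and (c) then follow with little extra work. For (a), the crux is the following dichotomy for an element $x$ of an irredundant cover $\{H_1,\dots,H_n\}$: by irredundancy each $H_j$ has a \emph{private} element, one lying in no other $H_i$, and if $x$ is private to $H_j$ then so is every generator of $\langle x\rangle$, and more generally the generator of any cyclic subgroup containing $\langle x\rangle$ (if that generator lay in $H_i$, so would $x$). So I would take $x$ private to $H_j$, enlarge $\langle x\rangle$ to a maximal cyclic subgroup $C$, and observe that a generator of $C$ lies only in $H_j$, whence $C\subseteq H_j$; this gives the forward direction. For the converse, given a proper subgroup $H$ containing a maximal cyclic subgroup $C$, form the collection consisting of $H$ together with every maximal cyclic subgroup of $G$ not contained in $H$: it is a cover because each element of $G$ generates a cyclic group sitting inside some maximal cyclic subgroup, and every member is proper since $G$ is noncyclic. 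A generator $c$ of $C$ lies in no member of this collection except $H$, since a second maximal cyclic subgroup containing $c$ would coincide with $C\subseteq H$; hence deleting redundant members one at a time to reach an irredundant subcover can never delete $H$. For the "in particular" clause: a cyclic maximal subgroup $M$ is a maximal cyclic subgroup because $G$ is noncyclic, and in any irredundant cover a generator of $M$ lies in some member $H_i$, so $M\subseteq H_i\subsetneq G$ forces $M=H_i$.

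For (b), let $\pi\colon G\to G/N$ be the quotient map and consider the pullback operation $\{\overline{H}_i\}\mapsto\{\pi^{-1}(\overline{H}_i)\}$. Properness, the covering property, and irredundancy all pass back and forth along $\pi$ (the members are unions of cosets of $N$ and $\pi$ is surjective), so this is a bijection between irredundant covers of $G/N$ and those irredundant covers of $G$ all of whose members contain $N$; in particular $\beta(G)\ge\beta(G/N)$, with equality precisely when every irredundant cover of $G$ has all members containing $N$. If $N$ is cyclically embedded, then by (a) each member of an irredundant cover of $G$ contains a maximal cyclic subgroup $C$, and $\langle N,c\rangle$ is cyclic for a generator $c$ of $C$, forcing $\langle N,c\rangle=C$ and hence $N\subseteq C$, so equality holds. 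Conversely, if $N$ is not cyclically embedded, choose $x$ with $\langle N,x\rangle$ noncyclic and enlarge $\langle x\rangle$ to a maximal cyclic subgroup $C$; then $N\not\subseteq C$, yet by (a) this $C$ belongs to some irredundant cover of $G$, which is therefore not a pullback, so the inequality is strict.

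For (c), I would apply (b) to $G\times\Z_n$ (noncyclic, as $G$ is) with the central direct factor $N=1\times\Z_n$, whose quotient is $G$; the point is that $N$ is cyclically embedded here, since $\langle 1\times\Z_n,(g,a)\rangle=\langle g\rangle\times\Z_n$ (it contains $(g,0)=(g,a)\cdot(1,-a)$), and $\langle g\rangle\times\Z_n$ is cyclic because $|g|$ divides $|G|$ and $(|G|,n)=1$. The case of cyclic $G$ is immediate from the convention $\beta=0$ together with the fact that $G\times\Z_n$ is then cyclic. The main obstacle is the converse half of (a): one must see that a maximal cyclic subgroup is rigid enough that its generator singles out a unique member of a cover, so that the irredundant‑ization procedure cannot kill the prescribed subgroup $H$; once (a) is in hand, (b) and (c) are essentially bookkeeping.
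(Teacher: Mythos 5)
Your proof is correct and follows essentially the same route as the paper: part a) via the cover by maximal cyclic subgroups and the private-element argument, part b) via the correspondence between irredundant covers of $G/N$ and irredundant covers of $G$ whose members all contain $N$ (combined with part a)), and part c) by observing that the $\Z_n$ factor is cyclically embedded. The only cosmetic difference is that in the converse of a) you pass to an irredundant subcover rather than checking directly that the constructed cover is already irredundant; both work.
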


\begin{proof}
a) Let $x_1, \ldots, x_n$ be generators of the distinct maximal cyclic subgroups of $G$. Clearly $G = \bigcup\limits_{i=1}^n \langle x_i \rangle$, and $\{ \langle x_i\rangle  \}_{i=1}^n$ is an irredundant cover. If $K_1, \ldots, K_m$ is a cover of $G$ then $G$ is the union of those $K_j$ that contain some $x_i$. So if the cover is irredundant then all $K_j$ contain some $x_i$. Conversely, if $H\supset \langle x_i\rangle$ for some $i$ then $G = H \cup \bigcup\limits_{x_i\not\in H} \langle x_i \rangle$ and this gives an irredundant cover since $x_i\in H\setminus  \bigcup\limits_{i\neq j} \langle x_j\rangle $. 

b) If $\{  K_i/N \}_{i=1}^n$ is an arbitrary irredundant cover of $G/N$ then $\{K_i\}_{i=1}^n$ is an irredundant cover of $G$. This determines a function $f$ from the set of irredundant covers of $G/N$ to those of $G$. Since $f$ is clearly injective, we have $\beta(G/N)\leq \beta(G)$. The image of $f$ is those irredundant covers $\{ K_i \}_{i=1}^n$ of $G$ with $K_i\supset N$ for $1\leq i\leq n$. We claim $f$ is surjective if and only if $N$ is cyclically embedded. 

Suppose first that $N\leq G$ is cyclically embedded. If $\langle x \rangle$ is a maximal cyclic subgroup then $\langle x, N \rangle$ is a cyclic subgroup containing $\langle x \rangle$, so $N\subset \langle x \rangle$. Thus $N$ is contained in all maximal cyclic subgroups, hence in every subgroup that appears in an irredundant cover by a). Thus $f$ is surjective and $\beta(G)= \beta(G/N)$.

Suppose now that $\beta(G)=\beta(G/N)$. Therefore if $\{ K_i \}_{i=1}^n$ is an irredundant cover of $G$ then $K_i\supset N$ for $1\leq i\leq n$. In particular this holds if the $K_i$ are the maximal cyclic subgroups of $G$. Let $g\in G$, and choose $x\in G$ so that $\langle x\rangle $ is a maximal cyclic subgroup containing $g$. Since $N\subset \langle x \rangle$ we have $\langle g, N \rangle \subset \langle x, N \rangle = \langle x \rangle$, and thus $N$ is cyclically embedded. 

c) The factor $\Z_n$ is cyclically embedded in $G$ so this follows from b). 
\end{proof}

\section{Groups with Two Irredundant Covers}

 Suppose $\beta(G)=2$. If $\{\langle x_i \rangle \}_{i=1}^n$ are the distinct maximal cyclic subgroups of $G$, this gives one irredundant cover. As proved above, if some $\langle x_i\rangle$ is a maximal subgroup of $G$ then it must be included in every irredundant cover. Since $\beta(G)>1$, we may assume without loss of generality that $\langle x_1\rangle$ is not maximal. Let $H\leq G$ with $\langle x_1 \rangle \subsetneq H \subsetneq G$. Then 
$\{ H\} \cup\{ \langle x_i \rangle : x_i \not\in H  \}$ is an irredundant cover since $x_1\in H\setminus  \bigcup\limits_{i=2}^n \langle x_i\rangle $. This therefore is the second irredundant cover of $G$. Since $H$ is the unique noncyclic subgroup that appears in an irredundant cover of $G$, $H$ is a characteristic subgroup of $G$. In particular, both covers of $G$ are normal covers: all conjugates of the subgroups appearing are also present. Since $H$ is maximal, being the unique proper subgroup containing $\langle x_1\rangle$, the index $[G:H]$ is prime. 
By symmetry, $H$ contains all the nonmaximal $\langle x_i \rangle $, so $\langle x_i\rangle \not\subset H$ if and only if $\langle x_i\rangle$ is a cyclic maximal subgroup. Thus $G$ contains cyclic (hence abelian) maximal subgroups, and in particular, $G$ is solvable. 




Fix $\langle x_i\rangle \not\subset H$. 
Since $\langle x_i\rangle $ is a maximal subgroup of the solvable group $G$, the index $[G: \langle x_i\rangle]=p^m$ for some prime $p$. Thus $\langle x_i\rangle$ contains a conjugate of each Sylow subgroup except the $p$-Sylows. In particular, all Sylow subgroups of $G$, with the possible exception of the $p$-Sylows, are cyclic. Let $A\subset \langle x_i\rangle$ be the maximal Hall subgroup of $\langle x_i\rangle$ with index a power of $p$. Then $A$ is also a Hall subgroup of $G$. By the proof of  [\cite{PK}, Theorem 1], either $G = A\rtimes P$ or $G=P\rtimes A$, where $P$ is some $p$-Sylow subgroup of $G$. 

Before proceeding to these two cases, we prove a useful fact about $D_{2n}$, the dihedral group of order $2n$. 

\begin{lemma}\label{dihedral}
Let $n\geq 2$. 

a) If $n$ is prime then $\beta(D_{2n})=1$. 

b) If $n$ is not prime then $\beta(D_{2n})>2$. 
\end{lemma}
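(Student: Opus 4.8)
The plan is to work directly with the subgroup lattice of $D_{2n} = \langle r, s \mid r^n = s^2 = 1,\ srs = r^{-1}\rangle$. First I would record the cyclic subgroups: a cyclic subgroup either lies inside $\langle r\rangle$ or is one of the $n$ order-two subgroups $\langle r^i s\rangle$ generated by a reflection, because no cyclic subgroup can contain both a nontrivial rotation and a reflection (the subgroup they generate is $\langle r^k, r^i s\rangle$ for some $k \mid n$ with $k<n$, a dihedral group of order at least $4$, which is not cyclic). Hence the maximal cyclic subgroups of $D_{2n}$ are exactly $\langle r\rangle$ together with $\langle s\rangle, \langle rs\rangle, \dots, \langle r^{n-1}s\rangle$, and by Lemma~\ref{basiclemma}a) the collection of all of these is one irredundant cover; I will call it $\mathcal{C}_0$.

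For part a), suppose $n$ is prime. Then every nontrivial proper subgroup of $D_{2n}$ has order $2$ or $n$, and in either case is a maximal subgroup (a proper subgroup containing it would have order a divisor of $2n$ that is a proper multiple of $2$, resp.\ of $n$, hence equal to $2n$). By Lemma~\ref{basiclemma}a), any subgroup occurring in an irredundant cover contains a maximal cyclic subgroup $C$; but $C$ is a nontrivial proper subgroup, hence maximal, so that subgroup equals $C$. Thus every irredundant cover consists solely of maximal cyclic subgroups, and it must contain all of them, since the generator of each maximal cyclic subgroup lies in no other (if $x$ generates the maximal cyclic subgroup $M$ and $x \in M'$ for another maximal cyclic subgroup $M'$, then $M = \langle x\rangle \subseteq M'$ forces $M = M'$). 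Hence $\mathcal{C}_0$ is the unique irredundant cover and $\beta(D_{2n}) = 1$.

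For part b), suppose $n$ is not prime and fix a divisor $d$ of $n$ with $1 < d < n$, which exists precisely because $n$ is composite. For $0 \leq j < d$ put $H_j = \langle r^d, r^j s\rangle$, a proper subgroup isomorphic to $D_{2(n/d)}$ consisting of $\langle r^d\rangle \subset \langle r\rangle$ together with exactly those reflections $r^i s$ with $i \equiv j \pmod d$. I would then exhibit three irredundant covers: $\mathcal{C}_0$ as above; $\mathcal{C}_1 = \{\langle r\rangle\} \cup \{H_j : 0 \leq j < d\}$; and $\mathcal{C}_2 = \{\langle r\rangle, H_0\} \cup \{\langle r^i s\rangle : d \nmid i\}$. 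Each is a cover since $\langle r\rangle$ absorbs the rotations while the remaining members cover the reflections. Each is irredundant: $\langle r\rangle$ is needed because $r \notin H_j$ (as $d > 1$) and $r$ lies in no order-two subgroup; each $H_j$ is needed because $r^j s$ lies in no other listed member; and each atom $\langle r^i s\rangle$ in $\mathcal{C}_2$ is needed for the same reason, using $d\nmid i$. Finally the three covers are pairwise distinct: $\mathcal{C}_1$ contains no subgroup of order $2$ (here $n/d > 1$, so every $H_j$ has order $> 2$) while $\mathcal{C}_0$ and $\mathcal{C}_2$ do; and $\mathcal{C}_0$ contains $\langle s\rangle$ but $\mathcal{C}_2$ does not, since $d \mid 0$. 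Therefore $\beta(D_{2n}) \geq 3 > 2$.

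The only delicate point is the bookkeeping in part b), and it rests entirely on the two inequalities $d > 1$ and $d < n$: the first guarantees that $r$ escapes every $H_j$ (so $\langle r\rangle$ is never redundant and the $H_j$ are proper), and the second guarantees that each $H_j$ is strictly larger than an order-two subgroup (so the three covers are genuinely different). I do not expect to need more than Lemma~\ref{basiclemma}a); a quotient argument via $D_{2n}/\langle r^d\rangle \cong D_{2d}$ together with Lemma~\ref{basiclemma}b) yields only $\beta(D_{2n}) > \beta(D_{2d})$, which is too weak when $n = p^2$, so the explicit construction of three covers appears to be unavoidable.
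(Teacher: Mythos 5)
Your proof is correct and takes essentially the same route as the paper: part a) is exactly the paper's observation that for prime $n$ every nontrivial proper subgroup of $D_{2n}$ is a cyclic maximal subgroup, and part b) is built on the same subgroups $\langle r^d, r^js\rangle$ attached to a proper divisor $d$ of $n$. The only difference is presentational: you verify three explicit irredundant covers by hand, whereas the paper gets the same count at once by noting that the two distinct noncyclic proper subgroups $\langle s, r^d\rangle$ and $\langle rs, r^d\rangle$ each contain a maximal cyclic subgroup and hence each appears in some irredundant cover, by Lemma \ref{basiclemma}a).
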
 

\begin{proof}
a) This follows from the classification given in \cite{B}, or from the fact that, when $n$ is prime, every nontrivial proper subgroup of $D_{2n}$ is a cyclic maximal subgroup.

b) Write $D_{2n} = \langle a, b : a^n=b^2=baba=1\rangle$ and let $d$ be a proper divisor of $n$. The subgroups $\langle b\rangle$ and $\langle ba\rangle$ are maximal cyclic subgroups (of order 2) and are contained in the distinct proper subgroups $\langle b, a^d\rangle$ and $\langle ba, a^d\rangle$, respectively. Both of these noncyclic subgroups occur in some irredundant cover, so $\beta(G)>2$. 
\end{proof} 


\section{The case \texorpdfstring{$G = A\rtimes P$}{}}

We first consider the simplest case $G = A\times P$. Equivalently, $G$ is nilpotent. Note $A$ is cyclically embedded in $G$ so $\beta(G) = \beta(P)=2$ by Lemma \ref{basiclemma}.

\begin{proposition}\label{NilpotentProp}
Suppose that $G$ is nilpotent and $\beta(G)=2$. Then $G = P\times \Z_m$ where $(|P|,m)=1$ and either $P = \Z_{p^2} \rtimes \Z_p$ for an odd prime $p$, or else $P = \Z_4\times \Z_2$. 
\end{proposition}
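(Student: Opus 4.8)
By the reduction already in place we have $G = A \times P$ with $A$ cyclically embedded and $\beta(G) = \beta(P) = 2$, so the whole problem collapses to: classify $p$-groups $P$ with exactly two irredundant covers. Setting $A = \Z_m$ (a cyclic group of order prime to $p$) then gives the stated form $G = P \times \Z_m$. So I would state this reduction in one line and spend the rest of the argument on a $p$-group $P$.

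First I would pin down the structure of $P$ using the general analysis from section 3 specialized to a $p$-group: $P$ has a cyclic maximal subgroup $\langle x\rangle$ of index $p$, and a characteristic maximal subgroup $H$ (also of index $p$) which is the unique noncyclic subgroup occurring in an irredundant cover, with $\langle x\rangle \not\subset H$. So $P = H\langle x\rangle$ and $|P| = p\,|H| = p\,|\langle x\rangle|$. The key point is that $p$-groups with a cyclic maximal subgroup are completely classified (for $p$ odd: $\Z_{p^n}$, $\Z_{p^{n-1}}\times\Z_p$, and the modular group $M_{p^n} = \Z_{p^{n-1}}\rtimes\Z_p$; for $p=2$ additionally $D_{2^n}$, $Q_{2^n}$, $SD_{2^n}$). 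I would invoke this classification and then rule out cases one at a time by computing $\beta$.

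Now I would go through the list. The cyclic groups are excluded since $\beta = 0$. For $\Z_{p^{n-1}}\times\Z_p$ with $n - 1 \geq 2$, the subgroup generated by an order-$p^{n-1}$ element is not maximal and lies in more than one proper subgroup containing it once there is "room," forcing $\beta > 2$ — this isolates $n - 1 = 2$, i.e. $\Z_{p^2}\times\Z_p$; but a direct count shows $\Z_{p^2}\times\Z_p$ itself has too many irredundant covers (its maximal cyclic subgroups of order $p^2$ number $p$, and too many noncyclic maximals are available), so this family is out entirely. For the modular group $M_{p^n}$: when $n = 3$ this is exactly $\Z_{p^2}\rtimes\Z_p$; I would verify directly that $\beta = 2$ here (one maximal cyclic $\langle x\rangle$ of order $p^2$ is noncyclic-covered only by $H = \langle x^p\rangle \times \Z_p$, and all order-$p$ cyclic maximals are forced, giving exactly the two covers from section 3), and that $M_{p^n}$ for $n \geq 4$ has $\beta > 2$ because the cyclic subgroup of order $p^2$ sitting below the maximal cyclic one creates extra room. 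For $p = 2$: Lemma \ref{dihedral} kills $D_{2^n}$ for $n\geq 3$ non-prime order and reduces the prime case to $\beta = 1$; $Q_8$ has $\beta = 1$ (classified in \cite{B}, or: its three cyclic subgroups of order 4 are all maximal) and $Q_{2^n}$, $SD_{2^n}$ for $n \geq 4$ are handled like the modular case. What survives is $M_{p^3} = \Z_{p^2}\rtimes\Z_p$ for odd $p$, and $\Z_4\times\Z_2$ (which I should check separately has $\beta = 2$: its maximal cyclic subgroups are the two of order 4 plus $\langle(0,1)\rangle$ of order 2, and the unique noncyclic proper subgroup $\Z_2\times\Z_2$ contains exactly the non-maximal cyclic piece).

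The main obstacle is the bookkeeping in the "$\beta > 2$" eliminations: for each rejected $p$-group I need to exhibit at least two \emph{distinct} noncyclic subgroups, each containing a maximal cyclic subgroup (so that each genuinely appears in some irredundant cover by Lemma \ref{basiclemma}a), and argue that these yield at least three distinct irredundant covers. The cleanest uniform tool is Lemma \ref{basiclemma}a together with the observation from section 3 that $\beta(G) = 2$ forces a \emph{unique} noncyclic subgroup to appear in irredundant covers; so it suffices, in each rejected case, to produce two distinct noncyclic subgroups each lying above a maximal cyclic subgroup — typically one abelian and one not, or two abelian ones of different isomorphism type — which is a short explicit check once the cyclic-maximal-subgroup classification is in hand. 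I would also double-check the two surviving groups really do have $\beta = 2$ rather than merely "at most 2," which again follows immediately from the section 3 construction since in each we find a non-maximal maximal-cyclic subgroup.
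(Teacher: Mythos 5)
Your route is the paper's route (reduce to a $p$-group $P$ with $\beta(P)=2$, use the classification of $p$-groups with a cyclic maximal subgroup, eliminate cases), but your treatment of the abelian case contains a genuine error. You discard the family $\Z_{p^{n-1}}\times\Z_p$ ``entirely,'' claiming a direct count shows $\Z_{p^2}\times\Z_p$ has too many irredundant covers because ``too many noncyclic maximals are available.'' That count is wrong: in $\Z_{p^2}\times\Z_p$ the elements of order dividing $p$ form the \emph{unique} noncyclic proper subgroup $\Z_p\times\Z_p$ (every other proper subgroup is cyclic, and of the $p+1$ maximal subgroups exactly one is noncyclic), so exactly one noncyclic subgroup can ever appear in an irredundant cover and $\beta(\Z_{p^2}\times\Z_p)=2$ for every prime $p$. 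This is precisely Brodie's Example 4.6, $\beta(\Z_{p^{n-1}}\times\Z_p)=n-1$, which the paper invokes to keep the abelian case $n=3$. Your own analysis of $\Z_4\times\Z_2$, where you correctly exhibit a unique noncyclic proper subgroup, is the $p=2$ member of the very family you just eliminated, so your case analysis is internally inconsistent. Moreover the abelian group $\Z_{p^2}\times\Z_p$ for odd $p$ \emph{does} belong to the proposition's conclusion: $\Z_{p^2}\rtimes\Z_p$ is meant in the inclusive sense (the action may be trivial), so your narrower classification (only the nonabelian modular group for odd $p$) is not merely unproved but false --- $\Z_9\times\Z_3$ is nilpotent with exactly two irredundant covers.

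A smaller slip in the same passage: for $\Z_{p^{n-1}}\times\Z_p$ the subgroup generated by an element of order $p^{n-1}$ \emph{is} maximal (index $p$); the correct source of extra covers when $n\geq 4$ is an order-$p$ maximal cyclic subgroup (e.g.\ $\langle(0,1)\rangle$) lying inside a chain of at least two distinct noncyclic proper subgroups, or simply the cited formula $\beta=n-1$. The rest of your plan (quaternion, dihedral, semidihedral, and modular cases, and the verification that the two surviving groups have $\beta=2$ via the uniqueness of the noncyclic subgroup appearing in covers) matches the paper's argument and is fine, but to repair the proof you must replace the elimination of the abelian family by the correct statement that $\Z_{p^2}\times\Z_p$ survives for all $p$ and is absorbed into the stated list.
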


\begin{proof}
From the above remarks, we must show that $\beta(P)=2$ if and only if $P$ is one of the listed $p$-groups. If $\beta(P)=2$ then $P$ has cyclic maximal subgroups, and such $p$-groups have been classified. In particular, either $P$ is a generalized quaternion group or $P = \Z_{p^{n-1}} \rtimes \Z_p $ for some $n\geq 3$ (if $n=2$ then $\beta(P)=1$ by \cite{B}) with semidirect product structures considered below.

First, suppose $P = \langle x, y : x^{2^k} =1, \ y^2 = x^{2^{k-1}}, \ yxy^{-1} = x^{-1} \rangle$ is a generalized quaternion group of order $2^{k+1}$. If $k=2$ then $P=Q_8$ and $\beta(Q_8)=1$ by \cite{B}, so we may assume $k\geq 3$. In this case $\langle y \rangle \subsetneq \langle y, x^2\rangle $ and $\langle xy\rangle \subsetneq \langle xy, x^2 \rangle$. Since $y$ and $xy$ generate maximal cyclic subgroups of $P$ and $\langle y, x^2 \rangle\neq \langle xy, x^2 \rangle$, we have $\beta(P)> 2$. 

If $P = D_{2^n}$ then $\beta(P)\neq 2$ by Lemma \ref{dihedral}.  

Next, suppose $P = \langle a, b: a^{2^{n-1}} = b^p = 1, \  bab^{-1} = a^{-1+2^{n-2}}\rangle$  with $n\geq 4$. Then $\langle b\rangle$ is a maximal cyclic subgroup and $\langle b \rangle \subsetneq \langle a^{2^{n-2}}, b \rangle \subsetneq \langle a^{2^{n-3}}, b \rangle$, so $\beta(P)>2$. 

It was shown in [\cite{B}, example 4.6.] that $\beta(\Z_{p^{n-1}} \times \Z_p)=n-1$, so in the abelian case we must have $n=3$ as claimed. 

Finally, suppose $P = \langle a , b: a^{p^{n-1}} = b^p  = 1, \ bab^{-1} = a^{1+p^{n-2}}\rangle$. Thus $a^p \in Z(G)$. If $n\geq 4$ then $\langle b \rangle \subsetneq \langle b, a^{p^2} \rangle  \subsetneq \langle b, a^p \rangle $, and $\langle b \rangle$ is a maximal cyclic subgroup of $P$, so $\beta(P)> 2$. So assume $n=3$. If $p=2$ then $P=D_8$ which was already ruled out. If $p>2$ then $\langle a^p, b \rangle$ is the unique noncyclic proper subgroup of $P$ and contains $\langle b \rangle$.   
Hence $\beta(P)=2$. 
\end{proof}

Now consider the case where $G=A\rtimes P$ and is not nilpotent. We will show that $\beta(G)\neq 2$. Recall $\langle x_i\rangle \supset A$ is a cyclic maximal subgroup of $G$. Since $G \supsetneq C_G(A) \supset \langle x_i\rangle$, the subgroup $C_G(A)  = \langle x_i\rangle$ is a maximal subgroup of $G$. Since $A\not\subset Z(G)$, $A$ is not cyclically embedded in $G$. By Lemma \ref{basiclemma},  $\beta(P) = \beta(G/A) < \beta(G)=2$. By \cite{B}, either $P$ is cyclic, $P=\Z_p^2$, or $P = Q_8$. 

If $P = Q_8$ then, since $\Aut(A)$ is abelian, $Z(Q_8)= [Q_8, Q_8]$ centralizes $A$. Thus $Z(Q_8)$ is cyclically embedded in $G$. By Lemma \ref{basiclemma} it therefore suffices to consider the cases where $P$ is cyclic or $P = \Z_p^2$. For such $P$ we see that $A\supset [G, G]$ hence the maximal subgroup $C_G(A)$ is normal and so has index $p$. If $P$ is cyclic then its unique index-$p$ subgroup centralizes $A$, and is therefore cyclically embedded. By Lemma \ref{basiclemma}, we may then assume $P= \Z_p$ or $\Z_p^2$. Let $n = |A|$.

Assume first that $P$ is a 2-group so $n$ is odd and $P\in \{ \Z_2, \ \Z_2^2  \}$. If $n$ is a prime power then the only order-2 automorphism of $A$ is inversion. Let $g\in P\setminus C_G(A)$ and write $A = A^+\times A^-$ where $A^{\pm}:=\{ a\in A: gag^{-1}=a^{\pm 1} \}$; these subgroups do not depend on the choice of $g$. It is easy to see that $A^+$ and $A^-$ are Hall subgroups of $A$. Since $A^+$ is cyclically embedded in $G = A\rtimes P = (A^-\rtimes P)\times A^+$, we may assume $A = A^-$. If $P = \Z_2$ then $G = D_{2n}$ and $\beta(D_{2n})\neq2$. by Lemma \ref{dihedral}. If $P= \Z_2^2$ then $G=D_{2n}\times \Z_2 = (\Z_n\rtimes \Z_2) \times \Z_2$. The middle $\Z_2$ factor is a maximal cyclic subgroup contained in the two distinct subgroups $D_{2n}$ and $\Z_2\times \Z_2$, so $\beta(G)= \beta(D_{2n}\times \Z_2) >2$.

Now suppose $p>2$ and $P\in \{\Z_p, \ \Z_p^2\}$. As above, we may assume that no Sylow subgroup of $A$ is centralized by $P$, since any such subgroup would have been a cyclically embedded direct factor of $G$.  

Assume first that $P = \Z_p$, so $P\cap C_G(A)=1$. Then $C_G(P) = Z(G)P$ is a maximal cyclic subgroup of $G$ that contains no Sylow subgroup of $A$. We will proceed by cases on $n$. Suppose $n$ is not a prime power. If $Q_1$ and $Q_2$ are distinct Sylow subgroups of $A$ then $Q_1 C_G(P)$ and $Q_2C_G(P)$ are distinct proper subgroups containing $C_G(P)$, so $\beta(G)>2$. So assume $n$ is a power of a prime $q$. Clearly $Z(G)$ is cyclically embedded and $Z(G/Z(G))=1$. We have $G/Z(G)=\Z_{q^s} \rtimes \Z_p$. If $s=1$ then $\beta(G)=\beta(G/Z(G))= 1$ by \cite{B}. Suppose $s>1$ and let $a$ and $y$ denote generators for $\Z_{q^s}$ and $\Z_p$, respectively. Then $\langle y\rangle$ and $\langle ay\rangle$ are maximal cyclic subgroups of order $p$, and are respectively contained in the distinct subgroups $\langle y, a^q\rangle$ and $\langle ay, a^q\rangle$. Thus $\beta(G)=\beta(G/Z(G)) >2$.

Finally, suppose that $P = \Z_p^2$. If $P_1 = C_G(A)\cap P$ then $P_1$ is a central subgroup of order $p$ but is not cyclically embedded. Thus $2=\beta(G)>\beta(G/P_1) = \beta(A\rtimes \Z_p)$ by Lemma \ref{basiclemma}. From above, $\beta(A\rtimes \Z_p) >1$ unless $n$ is a power of a prime, so we may assume this is the case. Then $\Aut(A)$ is cyclic, so if $y\in P\setminus C_G(A)$ then $C_A(y) = Z(G)\cap A$. So $\langle y, Z(G)\cap A\rangle$ is a maximal cyclic subgroup of $G$ and is contained in the two proper subgroups $A\rtimes \langle y \rangle$ and $Z(G)P$. Therefore $\beta(G)>2$.

\section{The case \texorpdfstring{$G = P\rtimes A$}{}}

Suppose $G= P\rtimes A$ and is not nilpotent. Observe that $C_G(A)=\langle x_i\rangle$ is a cyclic maximal subgroup as in the previous case. Clearly $C_G(A) = C_P(A) \times A$. Let $\Phi(P)$ denote the Frattini subgroup of $P$. We claim first that $\Phi(P) C_P(A) \neq P$. For otherwise $C_P(A)$ surjects onto $P/\Phi(P)$, and if $P/\Phi(P)$ is cyclic then $P$ is cyclic. But then $\Phi(P)$ is the unique maximal subgroup of $P$, and since $C_P(A)\neq P$ we obtain a contradiction. 

Since $\Phi(P)$ is a characteristic subgroup of $P$, it is normal in $G$ and $\Phi(P)C_P(A)\times A$ is a subgroup of $G$ containing $C_G(A)$. Since it is proper by our previous claim, we must have $\Phi(P)\subset C_P(A)$, so $C_P(A)$ is normal in $G$. In the next two propositions we classify those nonnilpotent $G = P\rtimes A$ with $\beta(G)=2$ such that $\Phi(P)$ is trivial and nontrivial, respectively. This will complete the proof of our final classification. 

\begin{proposition} \label{Trivial Phi}
Let $p$ be a prime, $P= \Z_p^k$, and $n>1$ with $(p, n)=1$. Suppose $G = P \rtimes \Z_n$ is not nilpotent, where $C_P(\Z_n)\times \Z_n$ is a cyclic maximal subgroup of $G$ and $C_P(\Z_n)$ is normal in $G$. Then $\beta(G)=2$ if and only if either

a) $k=2$ with $Z(G)=\Z_{n/d}$ for some prime divisor $d$ of $n$, and no proper subgroup of $P$ is normalized by $\Z_n$, or

b) or $k=1$ with $Z(G) = \Z_{n/q^2}$ for a prime divisor $q$ of $n$. 
\end{proposition}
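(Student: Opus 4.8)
The plan is to turn $\beta(G)=2$ into a subgroup count. First I would record the following reformulation of the analysis of Section~3 (together with Lemma~\ref{basiclemma}a): a finite noncyclic group has exactly two irredundant covers if and only if it has exactly one noncyclic subgroup that contains a maximal cyclic subgroup. Indeed, if $H$ is such a subgroup then the only irredundant covers are the family of all maximal cyclic subgroups and $\{H\}\cup\{\langle x\rangle:\langle x\rangle\text{ maximal cyclic},\ x\notin H\}$; having no such subgroup gives $\beta=1$, and two distinct ones $H\ne H'$ produce three distinct covers. Thus the proof reduces to computing, for $G=P\rtimes\Z_n$ as in the statement, the number $\nu(G)$ of noncyclic subgroups containing a maximal cyclic subgroup, and finding when $\nu(G)=1$. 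Throughout put $K=C_P(\Z_n)$ and $L=C_{\Z_n}(P)$, so $K$ is cyclic, $K\le Z(G)=K\times L$, and $K\in\{1,\Z_p\}$.

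Step 1: dispose of $K\ne1$. Then $K\cong\Z_p$ is a central direct factor of $P$, and Maschke's theorem (using $(|P|,n)=1$) gives a $\Z_n$-invariant complement $P_0$, so $G=(P_0\rtimes\Z_n)\times K=:G_0\times K$ with $\operatorname{rank}P\ge2$ (as $G$ is not nilpotent); moreover the hypothesis that $K\times\Z_n$ is maximal in $G$ forces $P_0\cong P/K$ to be an irreducible $\Z_n$-module. For any line $\ell\le P$ with $\ell\ne K$ one then has $C_{\Z_n}(\ell)=L$ (an easy field computation, $P/K$ being irreducible), so $C_G(\ell)=P\times L$ and $\ell\times L$ is a maximal cyclic subgroup of $G$; but it is contained in the two distinct proper noncyclic subgroups $P\times L$ and $P_0\times L$ (when $\operatorname{rank}P\ge3$), respectively $P\times L$ and $G_0$ (when $\operatorname{rank}P=2$). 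Hence $\nu(G)\ge2$ and $\beta(G)>2$. Since $Z(G)$ is a $p'$-group in cases a) and b), we may assume $K=1$ from now on.

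Step 2: structure, and the cases $k\ge3$ and $k=1$. With $C_P(\Z_n)=1$, maximality of $C_P(\Z_n)\times\Z_n=\Z_n$ means exactly that $P$ is an irreducible $\Z_n$-module, so $P\cong\mathbb{F}_{p^k}$ with $\Z_n$ acting through $\Z_m:=\Z_n/L$ ($m>1$, $k=\operatorname{ord}_m(p)$, $Z(G)=L=\Z_{n/m}$) by multiplication by an element $\omega$ of order $m$. The maximal cyclic subgroups of $G$ are the $p^k$ conjugates of $\Z_n$, each a maximal subgroup, together with one subgroup $\ell\times L$ for each line $\ell\le P$ (here $C_G(\ell)=P\times L$). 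If $k\ge3$, a line $\ell$ lies in a plane $W\subsetneq P$, and then $W\times L$ and $P\times L$ are two distinct proper noncyclic subgroups containing $\ell\times L$, so $\beta(G)>2$; hence $k\le2$. If $k=1$, then $P=\Z_p$, $C_G(P)=P\times L$ is cyclic, and its proper overgroups in $G$ are exactly the noncyclic groups $P\rtimes H$ with $L\subsetneq H\subsetneq\Z_n$, of which there are $\tau(m)-2$; thus $\beta(G)=2$ iff $\tau(m)=3$ iff $m=q^2$, which is case b).

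Step 3: the case $k=2$, and conclusion. Now $P=\Z_p^2$. A subgroup $S$ with $\ell\times L\subsetneq S\subsetneq G$ either contains $P$ — then $S=P\rtimes H$ with $L\le H\subsetneq\Z_n$, necessarily noncyclic, and there are $\tau(m)-1$ of these, each containing every $\ell\times L$ — or satisfies $S\cap P=\ell$, forcing $S\le N_G(\ell)=P\rtimes R$ with $R$ the stabilizer of $\ell$ in $\Z_n$, of order $e\cdot|L|$ where $e:=\gcd(m,p-1)=|\langle\omega\rangle\cap\mathbb{F}_p^{\times}|$; such an $S$ can be noncyclic only when $e>1$. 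Hence $\nu(G)=\tau(m)-1$ when $e=1$ and $\nu(G)\ge\tau(m)$ when $e>1$. Combining, $\nu(G)=1$ iff $\tau(m)=2$ and $e=1$; since a prime $m$ has $\operatorname{ord}_m(p)=2\Rightarrow m\nmid p-1\Rightarrow e=1$, this happens iff $m$ is prime, which is case a) (the irreducibility condition ``no proper subgroup of $P$ is normalized by $\Z_n$'' being already in force). For the converse implications one notes that in cases a) and b) the standing hypotheses ($K=1$ and $\Z_n$ maximal) are automatic, so the above computations apply and give $\beta(G)=2$ precisely in those two families. The step I expect to be the main obstacle is Step~1 together with the line-stabilizer analysis of Step~3: extracting the decomposition $G_0\times K$ with $P/K$ irreducible, and identifying $N_G(\ell)$ and exactly when an $S$ with $S\cap P=\ell$ fails to be cyclic; the rest is divisor bookkeeping with $\tau(m)$.
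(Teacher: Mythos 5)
Your proof is correct, and the counting criterion you start from ($\beta(G)=2$ if and only if exactly one \emph{proper} noncyclic subgroup contains a maximal cyclic subgroup) is a faithful restatement of the analysis of Section 3 combined with Lemma \ref{basiclemma}a. Two small points should be made explicit: in Step 1 the line $\ell$ must be chosen inside $P_0$ for the stated containments in $P_0\times L$ (resp.\ $G_0$) to hold, and in Step 3 the bound $\nu(G)\ge\tau(m)$ for $e>1$ needs a witness, e.g.\ $S=\ell\rtimes R$, which is nonabelian because $R$ acts on $\ell$ through the nontrivial scalar group $\langle\omega\rangle\cap\mathbb{F}_p^{\times}$; with these spelled out the argument is complete. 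Your route differs from the paper's in execution. The paper fixes one element $x$ of the irreducible summand $W$, forms the maximal cyclic subgroup $C=\langle x\rangle\times\Z_{n/d}$, proves $\Z_{n/d}\subset Z(G)$ by a normal-closure argument rather than a field computation, and in each bad case exhibits a specific pair of noncyclic overgroups of $C$; for $k=1$ it invokes \cite{B} when $C$ is maximal, and it proves the converse direction by passing to $G/Z(G)$ via Lemma \ref{basiclemma}b and inspecting the two small quotient groups. You instead exploit the $\mathbb{F}_{p^k}$-module structure to enumerate all maximal cyclic subgroups (the $p^k$ conjugates of $\Z_n$ together with the groups $\ell\times L$) and to count their noncyclic proper overgroups exactly, namely $\tau(m)-2$ for $k=1$ and $\tau(m)-1$ or at least $\tau(m)$ for $k=2$ according to whether $e=\gcd(m,p-1)$ equals $1$. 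This treats both implications uniformly, avoids the appeal to Brodie's classification and to the quotient-by-center reduction, and makes explicit why $d\nmid p-1$ is automatic in case a), at the price of the extra bookkeeping with $N_G(\ell)$ that the paper's more local argument sidesteps.
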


\begin{proof}

We first prove these conditions imply $\beta(G)=2$. In both cases $Z(G)$ is cyclically embedded so we can replace $G$ by $G/Z(G)$ and note that $Z(G/Z(G))=1$ for the given groups. So if a) holds then we may take $n$ to be prime and if b) holds we may take $n$ to be the square of the prime $q$. In case a) let $H = \Z_p\times \Z_p $, and in case b) let $H = \Z_p \rtimes  \Z_q$. In both cases, the maximal cyclic subgroups of $G$ which are not maximal subgroups are all of order $p$, are contained in $H$, and are in no other proper subgroup, so $\beta(G)=2$. 

Now suppose that $\beta(G)=2$. Let $U = C_P(\Z_n)$. Since $(p,n)=1$, there is a subgroup $W\subset P$ with $P = UW$, $U\cap W=1$, and $W\mathrel{\unlhd} G$ (consider $P$ as a vector space over the field with $p$ elements, and decompose it as a representation of $\Z_n$). Then $W\neq 1$ since $G$ is nonabelian. If $W'\subset W$ is a normal subgroup of $G$ then $UW' \rtimes \Z_n \supset U\times \Z_n$. Since $U\times \Z_n$ is a maximal subgroup, we must have $W' = W$ or $W'=1$. Let $1\neq x\in W$. Since $C_G(x)\supset P$, it follows that $C_G(x) = P \rtimes \Z_{n/d}$ for some divisor $d$ of $n$. Let $C$ be a maximal cyclic subgroup of $G$ containing $x$. Since $C\subset C_G(x)$ and $C\cap P = \langle x \rangle$, we have $C = \langle x \rangle \times \Z_{n/d}$. 

We claim $\Z_{n/d}\subset Z(G)$. To see this, let $a$ generate $\Z_n$. Then $a^d \in C_G(a^i xa^{-i})$ for all $i\in \Z$. However, the group generated by all $a^i x a^{-i}$, for $i\in \Z$, is a nontrivial normal subgroup of $G$ contained in $W$. Hence $a^d$ commutes with $W$. Since $a^d$ obviously commutes with $U$, we have $a^d \in Z(G)$ as claimed. 

Now suppose $k\geq 3$. Let $y_1 \in P \setminus \langle x \rangle$ and $y_2 \in P \setminus \langle x, y_1 \rangle$. If $H_i = \langle y_i, x, a^d\rangle \cong \Z_p\times \Z_p \times \Z_{n/d}$ then $H_i\supsetneq C$ and $H_1\neq H_2$. This implies $\beta(G)> 2$, so $1\leq k\leq 2$. 

Suppose $k=2$. If $e$ is a proper divisor of $d$ then $C\subsetneq P \times \Z_{n/d} \subsetneq P\times \Z_{ne/d}$, so $\beta(G)> 2$. Thus $d$ is prime. Now suppose $U\neq 1$, so $U \cong W \cong \Z_p$ and $W = \langle x\rangle$. Then $C \subsetneq P\times \Z_{n/d}$ and $C\subsetneq W \times \Z_n$, forcing $\beta(G)> 2$. Thus $U = 1$, no proper subgroup of $P = W$ is normal in $G$, and $Z(G) = \Z_{n/d}$. 

Finally, suppose $k=1$. Then $P = W = \langle x \rangle$ and $C$ is normal in $G$. If $C$ is a maximal subgroup then $[G:C]=d$ is prime. Clearly $G/Z(G) \cong \Z_p \rtimes \Z_d$ and $Z(G)$ is cyclically embedded. By \cite{B}, this implies $\beta(G)=1$. Therefore $C$ is not a maximal subgroup. Since $\beta(G)=2$ there is a unique proper subgroup containing $C$. It follows that $G/C=\Z_d =\Z_{q^2}$ for some prime divisor $q$ of $n$. The conclusion follows. 
\end{proof}


\begin{proposition}
Let $P$ be a $p$-group and $(p,n)=1$. Suppose $G = P \rtimes \Z_n$ is not nilpotent, where $C_P(\Z_n)\times \Z_n$ is a cyclic maximal subgroup of $G$, $C_P(\Z_n)\supset \Phi(P) \neq \{1\}$, and $C_P(\Z_n)$ is normal in $G$. Then $\beta(G)=2$ if and only if either 

a) $P= \Z_{p^t}$ with $t>1$ and $Z(G) = \Z_{p^{t-1}}\times \Z_{n/q^2}$ for a prime divisor $q$ of $n$, or 

b) $G = (Q_8\rtimes \Z_{3^k})\times \Z_m$ where $(m, 6)=1$.

\end{proposition}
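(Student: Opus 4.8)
The plan is to establish the two implications separately, using throughout that $\beta(G)=\beta(G/N)$ for a cyclically embedded $N$ (Lemma \ref{basiclemma}).

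\textbf{The ``if'' direction.} In each case the subgroup $Z(G)$ displayed in the statement is cyclically embedded. In (b), $Z(Q_8)$ is the unique subgroup of order $2$ of $Q_8$, hence is contained in every nontrivial subgroup, and the $p'$-part $\Z_{n/3}$ is the kernel of the action of $\Z_n$ on $Q_8$ and so lies in every conjugate of $\Z_n$; the check in (a) is analogous, with $\Phi(P)$ playing the role of $Z(Q_8)$. One may therefore pass to $G/Z(G)$. In case (a) this quotient is a group of the shape already classified in Proposition \ref{Trivial Phi}, which has $\beta=2$. In case (b) it is $(Q_8/Z(Q_8))\rtimes\Z_3\cong A_4$, and a direct inspection shows that the only irredundant covers of $A_4$ are the set of all seven maximal cyclic subgroups (the three of order $2$ together with the four of order $3$) and the set obtained from it by replacing the three order‑$2$ subgroups with the Klein four‑group; so $\beta(A_4)=2$. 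Hence $\beta(G)=2$ in both cases.

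\textbf{The ``only if'' direction: general reductions.} Assume $\beta(G)=2$ and set $U=C_P(\Z_n)$, which is cyclic (since $U\times\Z_n$ is) and, by hypothesis, normal in $G$ with $\Phi(P)\subseteq U$. First I would show that $P/U$ is an irreducible $\Z_n$-module: it is elementary abelian (because $\Phi(P)\subseteq U$) with no nonzero $\Z_n$-fixed points (because $G$ is nonnilpotent), and any subgroup of $P$ strictly containing $U$ has $\Z_n$-closure equal to $P$, so would produce a proper subgroup of $G$ strictly between the maximal subgroup $U\times\Z_n$ and $G$, which is impossible. Write $P/U\cong\Z_p^k$ and let $\bar d>1$ be the order of the image of $\Z_n$ in $\Aut(P/U)$. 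A coprime‑action argument (a $p'$-automorphism of $P$ acting trivially on $U$ and on $P/U$ is trivial) shows that the kernel $\Z_{n/\bar d}$ of this action centralizes $P$, so $\Z_{n/\bar d}\le Z(G)$ and in fact $Z(G)=(Z(P)\cap U)\times\Z_{n/\bar d}$. When $P$ is abelian, $U\le Z(G)$, so $G/Z(G)\cong\Z_p^k\rtimes\Z_{\bar d}$ is a nonnilpotent group with trivial center satisfying the hypotheses of Proposition \ref{Trivial Phi}; since $\beta(G/Z(G))\le\beta(G)=2$, that result restricts $k$ and $\bar d$, and because $Z(G)$ fails to be cyclically embedded whenever $P$ is noncyclic (an element of order $p$ of $P$ outside $U$ makes $\langle Z(G),x\rangle$ noncyclic), one is forced into case (a).

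\textbf{The ``only if'' direction: the nonabelian case.} Now suppose $P$ is nonabelian and set $V=[P,\Z_n]$. Coprime action gives $P=UV$, and $V$ must be nonabelian (otherwise $V\cap U=1$ and $P=U\times V$ would be abelian). I would then establish that $[V,V]=V\cap U=\Phi(V)$ is cyclic and $\Z_n$-centralized, that $V/[V,V]\cong P/U\cong\Z_p^k$ is the irreducible module above, and that $k\ge 2$ (since $V$ is noncyclic). The alternating bilinear commutator pairing on $V/[V,V]$ identifies $\Lambda^2(V/[V,V])$ with the top cyclic quotient of $[V,V]$ compatibly with the $\Z_n$-action, so — since $\Z_n$ acts trivially on $[V,V]$ — the image of $\Z_n$ in $\Aut(V/[V,V])$ has determinant $1$; combined with Proposition \ref{Trivial Phi} applied to $G/Z(G)$, this forces $k=2$. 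Finally, $Z(G)\cap P=Z(P)\cap U$ is a nontrivial (because $P$ is nilpotent) cyclic subgroup of $P$ lying in $\Phi(V)$, so cyclic‑embeddedness of $Z(G)$ forces every nontrivial subgroup of $P$ to contain it; hence $P$ has a unique subgroup of order $p$ and is therefore a generalized quaternion $2$-group, and of these only $Q_8$ admits an automorphism of odd order acting nontrivially on $V/[V,V]$ (for $m\ge 4$ the group $\Aut(Q_{2^m})$ is a $2$-group). Thus $p=2$, $P=Q_8$, the action has order $\bar d=3$, $Z(G)=Z(Q_8)\times\Z_{n/3}$ with $n/3$ coprime to $6$, and $G=(Q_8\rtimes\Z_{3^k})\times\Z_m$ with $(m,6)=1$: this is case (b).

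\textbf{The main obstacle.} The crux is the nonabelian case. The purely structural conditions that emerge — $P$ is $2$-generated with cyclic commutator subgroup equal to its Frattini subgroup and carries an odd‑order automorphism acting irreducibly on the Frattini quotient — are shared by much larger families, extraspecial $p$-groups being the obvious example, and it is only the hypothesis $\beta(G)=2$, used via the requirement that $Z(G)$ be cyclically embedded (equivalently $\beta(G)=\beta(G/Z(G))$, which genuinely fails for those families), that collapses the list down to $Q_8$. Making this reduction watertight, handling the dichotomy $\beta(G/Z(G))\in\{1,2\}$ via \cite{B} and Proposition \ref{Trivial Phi}, and verifying that no unexpected noncyclic subgroup can occur in an irredundant cover of $G$, is where the substance of the proof lies.
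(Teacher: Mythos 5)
Your ``if'' direction is fine and essentially matches the paper's (pass to $G/Z(G)$ and use Proposition \ref{Trivial Phi}, resp.\ $\beta(A_4)=2$). The genuine gap is in the ``only if'' direction, where you organize everything around $P/U$ (with $U=C_P(\Z_n)$) and around whether $Z(G)$ is cyclically embedded. Concretely, your abelian case does not eliminate $P\cong\Z_{p^t}\times\Z_p$ with $t\ge 2$, where $\Z_n$ centralizes a cyclic subgroup $U\cong\Z_{p^t}$ of index $p$ and acts through a prime order $q\mid p-1$ on the complement (e.g.\ $G=\Z_9\times S_3$ with $p=3$, $q=2$). This family satisfies every hypothesis of the Proposition and every constraint your sketch extracts: $P/U$ is a one-dimensional irreducible module, $Z(G)=U\times\Z_{n/q}$ is indeed not cyclically embedded, but Lemma \ref{basiclemma} then only yields $\beta(G)>\beta(G/Z(G))$, and $G/Z(G)\cong\Z_p\rtimes\Z_q$ has $\beta=1$ by \cite{B} --- which is numerically consistent with $\beta(G)=2$. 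So ``$P$ noncyclic $\Rightarrow$ $Z(G)$ not cyclically embedded'' does not by itself force case (a); you still need a direct proof that $\beta(G)>2$ for these groups. The paper gets this by quotienting by $\Phi(P)$ rather than by $Z(G)$: $\Phi(P)=\Z_{p^{t-1}}$ is central but not cyclically embedded, and $G/\Phi(P)\cong\Z_p^2\rtimes\Z_n$ is still nonnilpotent, so \cite{B} gives $\beta(G/\Phi(P))\ge 2$ and hence $\beta(G)\ge 3$.

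The same unproved step is the crux of your nonabelian case, as you yourself flag: the reduction to $Q_8$ (via ``every nontrivial subgroup of $P$ contains $Z(G)\cap P$, so $P$ is generalized quaternion'') presupposes that $Z(G)$ is cyclically embedded, and when it is not you only know $\beta(G/Z(G))\le 1$; excluding that branch (which is exactly what kills the extraspecial and modular $p$-groups) is not supplied by the $\Lambda^2$/determinant observation, nor is it clear that $G/Z(G)$ even has the $\Z_p^k\rtimes\Z_n$ shape needed to apply Proposition \ref{Trivial Phi} when $P$ is nonabelian. The paper avoids this entirely: since $\Phi(P)\subseteq C_P(\Z_n)$ is cyclic, once $P/\Phi(P)\cong\Z_p^2$ it invokes the Berger--Kov\'acs--Newman classification \cite{BKN} of $p$-groups with cyclic Frattini subgroup to conclude that $P$ is abelian or $|P|=p^3$, and then rules out $\Z_{p^t}\times\Z_p$, $\Z_{p^2}\rtimes\Z_p$ and the Heisenberg group uniformly by the $\Phi(P)$-argument above, leaving only $Q_8$ (where $\Phi(Q_8)=Z(Q_8)$ genuinely is cyclically embedded). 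Reworking your argument around the quotient $G/\Phi(P)$ and adding the \cite{BKN} input (or an equivalent hand-made classification) would close both gaps; as written the proposal does not establish the ``only if'' direction.
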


\begin{proof}

Write $P/\Phi(P) = \Z_p^k$ for some $k\geq 1$. We consider cases based on $k$. 

If $k=1$ then $P$ is cyclic. Since $\Phi(P)\subset C_P(A)$, it follows that $\Phi(P)$ is cyclically embedded in $G$. This shows that $G/\Phi(P)$ must a group of the kind considered in part b) of the previous proposition.  

Now suppose that $k\geq 2$. Let $K:=P/\Phi(P) = \Z_p^k$ so $G/\Phi(P)=K\rtimes \Z_n$. The image $I$ of the cyclic maximal subgroup $C_G(\Z_n) = C_P(\Z_n)\times \Z_n$ of $G$ in $G/\Phi(P)$ is still cyclic and maximal. Clearly $I$ is contained in $C_K(\Z_n)\times \Z_n \subset G/\Phi(P)$. If $I = C_K(\Z_n)\times \Z_n$ then $G/\Phi(Q)$ is a group of the type described by the Proposition \ref{Trivial Phi}. If $k\geq 3$ then the proof of that proposition showed $\beta(\Z_p^k \rtimes \Z_n)>2$. Thus $\beta(G)\geq \beta(G/\Phi(P))>2$ and we can conclude that $k=2$. If $I \subsetneq C_K(\Z_n)\times \Z_n $ then maximality of $I$ forces $C_K(\Z_n)\times \Z_n = G/\Phi(P)$. This implies $G/\Phi(P) = K \times \Z_n$ is nilpotent. By Proposition \ref{NilpotentProp} and \cite{B}, if $k\geq 3$ this forces $\beta(G)\geq \beta(G/\Phi(P))> 2$. Thus $k=2$ again.

Lastly, we consider the case $k=2$. The $p$-groups $P$ with cyclic Frattini subgroup have been classified in \cite{BKN}. Those with $P/\Phi(P) = \Z_p^2$ are a substantially more restricted family. Since such $P$ can be generated by two elements, it follows from remark (4) of \cite{BKN} that either $P$ is abelian or $|P|=p^3$. 

If $P$ is abelian then $P= \Z_{p^t}\times \Z_p$ for some $t\geq 2$. Clearly $\Phi(P) = \Z_{p^{t-1}}\subset Z(G)$ but is not cyclically embedded. Thus $\beta(G)>\beta(G/\Phi(P))$ and $G/\Phi(P)=\Z_p^2\rtimes \Z_n$ is nonnilpotent since $G$ is. The nonnilpotent case in \cite{B} implies $\beta(G/\Phi(P))>1$, so $\beta(G)>2$. 

So assume $|P|=p^3$. If $P = \Z_{p^2}\rtimes \Z_p$ is nonabelian, then $\Phi(P)\subset Z(G)$ is not cyclically embedded and the same argument as in the abelian case shows $\beta(G)>2$. If $P$ is the Heisenberg group of order $p^3$, with $p>2$, 
then the exact same argument again shows $\beta(G)>2$.  
Finally, if $P = Q_8$ then $\Aut(Q_8) = S_4$ and so $G=Q_8\rtimes \Z_{3^k}\times \Z_m$ where $(6,m)=1$ and $\Z_{3^k}$ acts on $Q_8$ by an automorphism of order 3. Note $Z(G) = \Z_2\times \Z_{3^{k-1}}\times \Z_m$ is cyclically embedded and $G/Z(G) = (\Z_2\times \Z_2)\rtimes \Z_3$ is a group of the kind considered in Proposition \ref{Trivial Phi} a), so $\beta(G) = \beta(G/Z(G))=2$.  
\end{proof}

\section{Further work}

There a various directions that one could take to strengthen or extend the above results. We consider a few of them here. 

The most obvious step would be a classification of $G$ with given $\beta(G)>2$. We outline a beginning of how this might be achieved for $\beta(G)=3$. If $H_1, \ldots, H_m$ are distinct noncyclic subgroups of $G$ that appear in some irredundant cover, and $\{ \langle x_i\rangle  \}_{i=1}^n$ are the maximal cyclic subgroups of $G$, then for each $1\leq j\leq m$ there is an irredundant cover $\{ H_j  \} \cup \{ \langle x_i \rangle: x_i\not\in H_j  \}$. Every irredundant cover of $G$ has the form $\{  H_{j_k}  \} \cup \{ \langle x_i \rangle: x_i\not\in \bigcup H_{j_k}  \}$ for a (possibly empty) subcollection $\{ H_{j_k}  \}$ of $\{H_j\}_{j=1}^m$.  So $$m+1 \leq \beta(G)\leq 2^m.$$ Clearly $\beta(G)=m+1$ if and only if, for every pair $\{H_j, H_k  \}$ with $j\neq k$, the cover $\{ H_j, H_k \} \cup \{ \langle x_i \rangle: x_i \not\in H_j\cup H_k  \}$ is not irredundant. So if $\beta(G)=3$ then $m=2$, and since $G \neq H_1\cup H_2$, it follows that $G$ contains cyclic maximal subgroups and hence is solvable. 

It is shown in \cite{B} that $\beta(G)$ can take on any possible integer, since $\beta(\Z_{p^n} \times \Z_p)= n$ if $p$ is prime. It should be possible to determine $\beta(G)$ for any abelian $G$ in an explicit fashion. More ambitiously, one may ask what structural information $\beta(G)$ determines when $G$ is a $p$-group, or nilpotent. Indeed, if $|H|$ and $|K|$ are coprime  then $$\beta(H\times K) = \beta(H)+ \beta(K) + \beta(H)\beta(K).$$ This holds because all covers have the form $\{ H_i \times K  \}_{i\in I}$, $\{ H \times K_j  \}_{j\in J}$, or $\{H_i \times K_j  \}_{i\in I,j\in J}$ where $\{H_i\}_{i\in I}$ and $\{K_j\}_{j\in J}$ are covers of $H$ and $K$, respectively. 
In particular, this reduces the computation of $\beta(G)$ for nilpotent groups to the case of $p$-groups. 

One might similarly ask for a determination of $\beta(G)$ for other worthwhile classes of $G$. For example, it is not hard to show directly from the lattice of subgroups that $\beta(D_8)=4$. One may reasonably seek the value of $\beta(D_{2n})$, or more ambitiously $\beta(A_n)$ and $\beta(S_n)$. 

We saw above that if $\beta(G)\leq 3$ then $G$ is solvable. It is natural to seek the smallest value $M$ such that there exists a nonsolvable $G$ with $\beta(G)=M$. Consideration of the case $G=A_5$ suggests $M$ may be substantially larger than 3. One may also ask whether every integer larger than $M$ can be $\beta(G)$ for some nonsolvable $G$, or if there are natural obstructions. 

In another direction, one could constrain the types of irredundant covers that are permitted. For example, one might define the invariant $\beta_N(G)$ to be the number of normal irredundant covers of $G$, meaning those covers preserved by conjugation. Clearly $\beta_N(G)\leq \beta(G)$. As a consequence of the work above, if $\beta(G)\leq 2$ then $\beta_N(G)=\beta(G)$. It is unclear if the converse holds: if $1\leq \beta_N(G)\leq 2$ then may we have $\beta(G)>\beta_N(G)$? Under what conditions on $G$ do we have $\beta(G)=\beta_N(G)$? Can we classify $G$ with small $\beta_N(G)$?

\end{document}